\documentclass[12pt]{article}
\usepackage{amsmath,amsfonts,amssymb,amsthm}
\usepackage{tcolorbox}
\definecolor{pantone312}{HTML}{009DD1}
\usepackage{mathtools}
\usepackage{bbm}
\usepackage{graphicx}
\usepackage{tabularx}
\usepackage{color}
\topmargin=-1.5cm
\textheight 23cm
\textwidth 15cm

\definecolor{darkgreen}{rgb}{0,0.55,0}
%
%

\newcommand{\grad}{\nabla}

\newcommand{\laplace}{\Delta}

\renewcommand{\div}{\grad\cdot}
\newcommand{\curl}{\grad\times}

\newcommand{\N}{\mathbbm{N}}

\newcommand{\R}{\mathbbm{R}}

\newcommand{\T}{\mathbbm{T}}


\usepackage{sectsty}

\sectionfont{\large}
\subsectionfont{\normalsize}
\subsubsectionfont{\normalsize}
\paragraphfont{\normalsize}

\newcommand{\on}{\omega^{\nu}}
\newcommand{\un}{u^{\nu}}
\newcommand{\ok}{\omega^{k}}
\newcommand{\uk}{u^{k}}

\newcommand{\eps}{\varepsilon}

\newcommand{\dd}{{\rm d}}
\newcommand{\dx}{\,\dd x}
\newcommand{\dt}{\, \dd t}

\def\XXint#1#2#3{{\setbox0=\hbox{$#1{#2#3}{\int}$ }
\vcenter{\hbox{$#2#3$ }}\kern-.59\wd0}}

\newtheorem{prop}{Proposition}
\newtheorem{theorem}{Theorem}
\newtheorem{lemma}{Lemma}

\pagecolor{white}


\begin{document}
\phantom{ }
\vspace{4em}

\begin{flushleft}
{\large \bf On the vanishing viscosity limit for 2D incompressible flows with unbounded vorticity}\\[2em]
{\normalsize \bf Helena J. Nussenzveig Lopes}\\[0.5em]
\small Instituto de Matem\'atica,  Universidade Federal do Rio de Janeiro, Brazil.\\
E-mail: hlopes@im.ufrj.br\\[3em]
{\normalsize \bf Christian Seis}\\[0.5em]
\small Institut f\"ur Analysis und Numerik,  Westf\"alische Wilhelms-Universit\"at M\"unster, Germany.\\
E-mail: seis@wwu.de\\[3em]
{\normalsize \bf Emil Wiedemann}\\[0.5em]
\small Institut f\"ur Angewandte Analysis, Universit\"at Ulm, Germany.\\
E-mail: emil.wiedemann@uni-ulm.de\\[3em]

{\bf Abstract:} We show strong convergence of the vorticities in the vanishing viscosity limit for the incompressible Navier-Stokes equations on the two-dimensional torus, assuming only that the initial vorticity of the limiting Euler equations is in $L^p$ for some $p>1$. This substantially extends a recent result of Constantin, Drivas and Elgindi, who proved strong convergence in the case $p=\infty$. Our proof, which relies on the classical renormalization theory of DiPerna-Lions, is surprisingly simple. 
\end{flushleft}

\vspace{2em}

\section{Introduction}

The vorticity formulation of the two-dimensional Navier--Stokes equations describing the motion of an incompressible viscous fluid with planar symmetry is given  by the nonlinear advection-diffusion equation
\begin{equation}\label{1}
\partial_t \on + \un\cdot \grad \on = \nu \laplace \on + g^{\nu},
\end{equation}
where  $\on$ descibes the vorticity of the fluid, $\un$ is the corresponding velocity field,  $g^{\nu}$ represents a forcing term (which is given, but will later be chosen depending on $\nu$), and $\nu$ is the positive viscosity constant.   In order to avoid boundary effects, we consider the evolution on the torus $\T^2$.  It is well-known that $\un$  can  be reconstructed from $\on$ with the help of the Biot--Savart law
\begin{equation}
\label{6}
\un = -\grad^{\perp}(-\laplace)^{-1} \on,
\end{equation}
where $z^{\perp} = (-z_2,z_1)$ is the rotation of the point $z=(z_1,z_2) $ and $(-\laplace)^{-1}$ denotes the inverse Laplacian. The initial condition will be denoted by $\on_0$, that is,
\begin{equation}\label{2}
\on(0) = \on_0.
\end{equation}
In order to make sense of \eqref{6}, it is necessary to require that the vorticity field has zero mean on the torus, which is preserved by the Navier--Stokes equations as long as the forcing term $g^{\nu}$ has zero mean, which we will assume from here on. Notice that assuming that both $\omega^{\nu}$ and $g^{\nu}$ have zero mean is modelling-wise very natural in the present setting because both quantities  are obtained as curls from the fluid's velocity and body forcing, respectively, and thus, for instance, $\int_{\T^2}\omega^\nu\dx = \int_{\T^2}\curl \un\dx=0$ as a consequence of the periodic boundary conditions on the torus.

We will allow both initial vorticity and forcing term to be irregular and unbounded, and dependent on $\nu$. More precisely, for some fixed time $T>0$ and exponent $p\in (1,\infty)$, we suppose that
\begin{equation}\label{3}
\on_0 \in L^p(\T^2) ,\quad g^\nu\in L^1((0,T);L^p(\T^2)).
\end{equation}
In this setting, there exists a unique (mild) solution $\on\in L^{\infty}((0,T);L^p(\T^2))$ satisfying the energy estimate
\begin{equation}\label{4}
\|\on\|_{L^{\infty}_tL^p_x} + \nu \|\grad |\omega^\nu|^{\frac{p}2}\|^{\frac2p}_{L_t^2 L_x^2}\lesssim \|\on_0\|_{L^p_x} + \|g^\nu\|_{L^1_tL^p_x}.
\end{equation}
Here and in the following, we use the short notation $L^q_tL^p_x$ for $L^q((0,T);L^p(\T^2))$ and analogous intuitive notations for conceptually similar function spaces.

Our main concern in this paper is the small viscosity behavior. In the singular limit when $\nu\to 0$, assuming the initial data and forcing term converge appropriately, the Navier--Stokes vorticity equations \eqref{1}, \eqref{6} reduce  to the Euler vorticity equations
\begin{align}
\partial_t \omega + u\cdot \grad \omega &= g,\label{5}\\
 u & = -\grad^{\perp}(-\laplace)^{-1}\ast \omega,\label{7}
\end{align}
describing thus the evolution of an incompressible inviscid fluid in terms of the vorticity field $\omega$.  As before, $u$ is the associated velocity field, and the advection equation \eqref{5} is equipped with the initial condition 
\begin{equation}\label{21}
\omega(0) = \omega_0,
\end{equation}
which is obtained from the viscous vorticity as the inviscid limit
\[
\omega^{\nu}_0\to \omega_0\quad\mbox{strongly in }L^p_x.
\]
Likewise, we assume the convergence of the forcing terms,
\[
g^{\nu}\to g\quad\mbox{strongly in }L^1_tL^p_x.
\]
Thus, the Cauchy problem \eqref{5}, \eqref{7}, \eqref{21} has to be solved under the assumption 
\[
\omega_0 \in L^p(\T^2),\quad g\in L^1((0,T);L^p(\T^2)).
\]

We remark that outside the Yudovich class \cite{Yudovich63}, that is, for $p<\infty$, it is a long-standing open problem whether solutions to the Euler vorticity equations are unique in   $L^{\infty}((0,T);L^p(\T^2))$. On the positive side, vanishing viscosity solutions are known to exist and it has been proved in \cite{FilhoMazzucatoNussenzveig06,CrippaSpirito15,CrippaNobiliSeisSpirito17} that they can be classified as renormalized solutions in the sense of DiPerna and Lions \cite{DiPernaLions89} for any $p\in [1,\infty]$. In fact, in these papers, the unforced equations, i.e., \eqref{1}  and \eqref{5}  with $g^{\nu}=g=0$, are studied, and for $p<2$ the additional assumption is made that the initial vorticity is smooth. At least as long as $p>1$, the arguments therein are almost identical if the forcing terms are included and the  smoothness assumption on the initial data is dropped. To keep the presentation in the present paper concise, we will only consider integrability exponents $p>1$. We will comment on $p=1$ later.

We recall that a solution to the advection equation \eqref{5} is called \emph{renormalized} if for any $\beta\in C^{\infty}(\R)$ that is bounded, has bounded derivatives and vanishes at the origin, it holds that
\begin{equation}
\label{8}
\partial_t \beta(\omega) + u\cdot \grad \beta (\omega) = \beta'(\omega)g.
 \end{equation}
As this identity is certainly true for smooth solutions, the validity of \eqref{8} can be roughly interpreted as the validity of the chain rule. Notice also that for small $p$, the advection equation \eqref{5} can no longer be interpreted in the sense of distributions. Indeed, if the vorticity field $\omega$ is $L^{\infty}_tL^p_x$ integrable, Calder\'on--Zygmund theory  for the Biot--Savart law \eqref{7} and Sobolev embeddings reveal that the velocity field  $u$ belongs to $L^{\infty}_tL^{ 2p/(2-p)}_x$, and thus, the transport nonlinearity  $u\omega$ is integrable   only if $p\ge \frac43$. For smaller integrability exponents, different interpretations   of the advection equation \eqref{5} are proposed in the literature. The probably most prominent ones use  symmetrization arguments as those introduced by Delort, e.g.,  \cite{Delort91,VecchiWu93, Schochet95, BohunBouchutCrippa16}. More relevant for our purposes is the 
notion of  renormalized solutions in the sense of DiPerna and Lions, given in \eqref{8}. Here, because $\beta(\omega)$ is bounded, the nonlinearity $u\beta(\omega)$ in \eqref{8} is again meaningful, and the equation can thus  interpreted in the sense of distributions.

Notice that an important consequence of the renormalization theory is the fact that the inviscid energy estimate holds true, that is
\begin{equation}\label{400}
\|\omega\|_{L^{\infty}_tL^p_x} \le \|\omega_0\|_{L^p_x} + p\|g\|_{L^1_tL^p_x}.
\end{equation}

The main result in \cite{CrippaSpirito15} now states that the sequence of viscous vorticity fields $\{\omega^{\nu}\}_{\nu>0}$ is precompact in $L^{\infty}_tL^p_x$ endowed with the weak* topology, and the limit object $\omega$ is a solution of the Euler vorticity equation \eqref{5}, \eqref{7} when the advection equation is interpreted in the sense of \eqref{8}.  Our main goal of the present paper is to upgrade this convergence result to \emph{strong} convergence in $L^{\infty}_tL^p_x$. In what follows, we will accordingly  select a subsequence $\{\omega^{\nu_k}\}_{k\in\N}$ with $\nu^k\to 0$ and a vorticity field $\omega$ satisfying \eqref{8}, \eqref{5} such that
\begin{equation}\label{401}
\omega^{\nu_k} \to \omega\quad\mbox{weakly* in }L^{\infty}_tL^p_x.
\end{equation}

With these notations fixed, our main result is the following.

\begin{theorem}
\label{T1}
It holds that
\begin{equation}
\label{9}
 \omega^{\nu_k} \to \omega\quad  \mbox{strongly in }C_tL^p_x.
\end{equation}
\end{theorem}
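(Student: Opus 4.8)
The plan is to exploit that, although the Navier--Stokes system is nonlinear, the vorticity equation \eqref{1} is \emph{linear} in $\omega^{\nu}$ once the velocity field is regarded as given. First I would observe that the Biot--Savart law \eqref{6} is smoothing: since $\{\omega^{\nu_k}\}$ is bounded in $L^\infty_tL^p_x$, the velocities $u^{\nu_k}=-\grad^{\perp}(-\laplace)^{-1}\omega^{\nu_k}$ are bounded in $L^\infty_tW^{1,p}_x$, and the compact embedding $W^{1,p}(\T^2)\hookrightarrow\hookrightarrow L^q(\T^2)$ upgrades the weak-$*$ convergence \eqref{401} to \emph{strong} convergence $u^{\nu_k}\to u$ in $L^\infty_tL^q_x$ for suitable $q$. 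This compactness is what breaks the apparent circularity of the problem: it lets me treat $\omega^{\nu_k}$ as the solution of a linear advection--diffusion equation with a given, strongly convergent drift $u^{\nu_k}$ and a given, strongly convergent source $g^{\nu_k}$.

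Writing $S^{\nu}_{t,s}$ for the linear solution operator of $\partial_t+u^{\nu}\cdot\grad-\nu\laplace$ and $S_{t,s}$ for the transport (renormalized flow) operator of the limiting field $u$, Duhamel's formula reads
\begin{equation*}
\omega^{\nu_k}(t)=S^{\nu_k}_{t,0}\omega^{\nu_k}_0+\int_0^t S^{\nu_k}_{t,s}g^{\nu_k}(s)\,ds,\qquad \omega(t)=S_{t,0}\omega_0+\int_0^t S_{t,s}g(s)\,ds.
\end{equation*}
Because each propagator is an $L^p$ contraction (the dissipation in \eqref{4} has the right sign), the forcing discrepancy splits as
\begin{equation*}
\int_0^t S^{\nu_k}_{t,s}\bigl(g^{\nu_k}-g\bigr)(s)\,ds+\int_0^t\bigl(S^{\nu_k}_{t,s}-S_{t,s}\bigr)g(s)\,ds,
\end{equation*}
whose first term is bounded in $L^p_x$ by $\|g^{\nu_k}-g\|_{L^1_tL^p_x}\to 0$. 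Everything thus reduces to the \emph{homogeneous} statement $S^{\nu_k}_{t,s}\phi\to S_{t,s}\phi$ strongly in $L^p_x$ (uniformly in $t$, for each fixed $\phi\in L^p$ and start time $s$), after which dominated convergence in $s$, using the domination $\|(S^{\nu_k}_{t,s}-S_{t,s})g(s)\|_{L^p_x}\le 2\|g(s)\|_{L^p_x}\in L^1_s$, disposes of the second term.

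The heart of the matter is therefore the unforced convergence, and here I would use the renormalization structure directly. Fix a bounded, strictly convex $\beta\in C^\infty$ with bounded derivatives and $\beta(0)=0$. For the viscous flow, $\beta(\omega^{\nu_k})$ obeys
\begin{equation*}
\partial_t\beta(\omega^{\nu_k})+u^{\nu_k}\cdot\grad\,\beta(\omega^{\nu_k})=\nu_k\laplace\beta(\omega^{\nu_k})-\nu_k\beta''(\omega^{\nu_k})|\grad\omega^{\nu_k}|^2,
\end{equation*}
while the limit obeys \eqref{8} with $g=0$. Letting $\overline{\beta(\omega)}$ denote the weak-$*$ limit of $\beta(\omega^{\nu_k})$ and passing to the limit---where the strong convergence of $u^{\nu_k}$ is used to pass the transport term---the nonnegative defect $\gamma:=\overline{\beta(\omega)}-\beta(\omega)\ge 0$ (Jensen) satisfies
\begin{equation*}
\partial_t\gamma+\div(u\gamma)=-\mu,\qquad \gamma|_{t=0}=0,
\end{equation*}
with $\mu\ge 0$ the weak-$*$ limit of the dissipation. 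Integrating over $\T^2$ forces $\int_{\T^2}\gamma(t)\,dx=-\int_0^t\!\!\int_{\T^2}d\mu\le 0$, hence $\gamma\equiv 0$ and $\mu\equiv 0$: there is no anomalous dissipation. Strict convexity of $\beta$ then makes the underlying Young measure a Dirac mass, i.e. $\omega^{\nu_k}\to\omega$ strongly. Moreover $t\mapsto\int_{\T^2}\beta(\omega^{\nu_k}(t))\,dx$ is monotone and the limit is continuous, so by Pólya's monotonicity theorem the convergence $\int_{\T^2}\beta(\omega^{\nu_k}(t))\,dx\to\int_{\T^2}\beta(\omega(t))\,dx$ is \emph{uniform} in $t$; combined with weak continuity and the uniform convexity of $L^p$ this yields the homogeneous convergence in $C_tL^p_x$.

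The main obstacle is precisely the forcing. A direct defect computation for the \emph{forced} equations introduces an extra source $[\overline{\beta'(\omega)}-\beta'(\omega)]\,g$ in the equation for $\gamma$; since $\beta'$ is nonlinear this term has no definite sign and is controlled only by $\sqrt{\gamma}$, which is too weak to close a Grönwall estimate. The device that removes this obstruction is the linearity of the vorticity equation in $\omega$ for fixed velocity, which licenses the Duhamel reduction above and confines all use of convexity to the unforced problem, where the forcing defect simply does not arise. Assembling the Duhamel terms and taking a supremum in $t$ then produces \eqref{9}.
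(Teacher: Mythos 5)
Your reduction hinges on the Duhamel representation $\omega(t)=S_{t,0}\,\omega_0+\int_0^t S_{t,s}\,g(s)\,\dd s$ for the limiting Euler vorticity, and this is precisely the step that cannot be justified in the regime the paper is actually about, namely $1<p<2$. For such $p$ the only available solution concept for the transport equation \eqref{5} is that of renormalized solutions (for $p<\frac43$ the distributional formulation is not even meaningful), and renormalization is a \emph{nonlinear} property: it is not closed under sums, let alone under the continuum superposition $\int_0^t S_{t,s}\,g(s)\,\dd s$. The paper flags exactly this issue in the proof of Lemma \ref{L1} (``it is not an immediate consequence of the concept of renormalized solutions that differences of such solutions are themselves renormalized''), and there the difference can be handled only because one of the two solutions has mollified, hence bounded, data, via an argument as in Lemma 4.2 of \cite{ColomboCrippaSpirito15}. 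Without such an argument you cannot assert that your Duhamel superposition is a renormalized solution of the forced equation, nor therefore that it coincides with the vanishing-viscosity limit $\omega$; the alternative route, uniqueness of \emph{distributional} solutions, requires the DiPerna--Lions duality $\frac1p+\frac1q\le 1$ between $\grad u\in L^p_x$ and the solution class $L^q_x$, which fails when the solution is merely in $L^p_x$ with $p<2$. For $p\ge 2$ your reduction is fine, but that case is accessible by simpler arguments (see the remark in the introduction); the paper's stated contribution is the small-$p$ case.

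A second gap of the same nature sits inside your homogeneous defect argument: you assume that ``the limit obeys \eqref{8} with $g=0$'', i.e.\ that the weak limit of $S^{\nu_k}_{t,s}\phi$ is the renormalized solution $S_{t,s}\phi$. For $\phi$ only in $L^p_x$, $p<2$, this identification is not provided by DiPerna--Lions theory (again the duality condition fails), and it is needed both for the sign condition $\gamma\ge 0$ and for the transport equation satisfied by $\gamma$; so the defect computation quietly assumes a statement essentially equivalent to the one being proved. (Even granting it, strict convexity of a \emph{bounded} $\beta$ plus $\gamma\equiv 0$ yields convergence in measure; upgrading to strong $C_tL^p_x$ convergence still requires equi-integrability of $|\theta^k|^p$ uniformly in $t$ and $k$, which you do not address.) The paper sidesteps both obstructions at once by mollifying data and forcing rather than decomposing in time: the auxiliary linear problems then carry data in $L^{\tilde p}_x$ with $\tilde p\ge\max\{2,p'\}$, where DiPerna--Lions uniqueness applies directly (Proposition \ref{P1}), and the comparison with the nonlinear solutions is made through the stability estimate of Lemma \ref{L1}, where the renormalization-of-differences issue can be resolved because the mollified solutions are bounded. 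If you wished to salvage your Young-measure argument you would have to run it on mollified data---at which point the mollification, not the Duhamel formula, is doing the real work, and you have essentially rediscovered the paper's scheme.
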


Even in the well-studied Yudovich class $L^{\infty}_tL^{\infty}_x$, this result has been established only very recently \cite{ConstantinDrivasElgindi19} by using sophisticated arguments that involve borderline regularity estimates for the Biot--Savart kernel, energy estimates for advection-diffusion-type equations with variable integrability exponents, and new uniform short time estimates on vorticity gradients. The present work thus largely extends these results to vorticity fields in $L^{\infty}_tL^p_x$ for the full range of exponents $p\in(1,\infty]$ and moreover substantially simplifies the proofs by reducing the argumentation to a suitable application of the DiPerna--Lions theory developed in \cite{DiPernaLions89}. Our gentle extension of this theory is formulated in Proposition \ref{P1} below. 

We hasten to add that the more sophisticated proof in~\cite{ConstantinDrivasElgindi19} has additional merits concerning convergence rates. Indeed, simple examples indicate that even for vorticity fields  in the Yudovich class $L^{\infty}_tL^{\infty}_x$, no rates of strong $L^p$ convergence can exist. The situation changes if additional regularity assumptions on the initial data are imposed, see Corollary 2 in \cite{ConstantinDrivasElgindi19}. However, rates of \emph{weak} convergence can be obtained, more precisely, bounds on the Wasserstein distance between the Navier--Stokes and the Euler vorticity fields, cf.~\cite{Seis20}. These bounds are an extension of stability estimates for the Euler equations in the Yudovich class developed in \cite{Loeper06a}, and are closely related to estimates on the convergence rates of the associated velocity fields as obtained in \cite{ConstantinWu95,ConstantinWu96,Chemin96,AbidiDanchin04,Masmoudi07}.
We remark that even the related vanishing diffusivity problem in the linear setting with Sobolev vector fields was treated only recently  \cite{Seis17,Seis18}.

{
We like to note that in the unforced situation $g^{\nu} = g=0$, the argument for strong convergence of the vorticities is even shorter as the one presented below, and builds up on  lower semicontinuity for weakly converging sequences, $\|\omega(t)\|_{L^p_x}\le \liminf_{k\to \infty} \|\omega^{\nu_k}(t)\|_{L^p_x}$, dissipation of the viscous solutions, $\|\omega^{\nu_k}(t)\|_{L^p_x} \le \|\omega_0^{\nu_k}\|_{L^p_x}$,  conservation of vorticity in the limit $\|\omega(t)\|_{L^p_x} = \|\omega_0\|_{L^p_x}$, and an Arzel\`a--Ascoli-type argument to guarantee continuity in time. Such a strategy is also indicated in~\cite[Remark 2]{ConstantinDrivasElgindi19}. A modification of the argument still works in the presence of forcing terms for $p=  2$ and then by interpolation and iteration also for larger values of $p$. Our main contribution here is thus the treatment of the setting with smaller integrability exponents.
}

At this point, we like to comment in which regards the $p=1$ case differs from the situation considered here. Most notably, if the vorticity is only $L^1$ integrable in space, the associated velocity field is no longer a Sobolev function. Indeed, the velocity gradient is merely a singular integral of an $L^1$ function for which the standard DiPerna--Lions theory fails. An extension of this theory to such vector fields has been developed in \cite{BouchutCrippa13,CrippaNobiliSeisSpirito17}, at least for the transport equation \eqref{5}. It is, however, not immediate how the concepts developed in these works translate to the diffusive setting \eqref{1}, even though their validity has to be expected due to better regularity properties. 
 A second issue concerns the compactness, which in $L^1$ is slightly more delicate as equi-integrability statements have to be established. This has been achieved in \cite{CrippaNobiliSeisSpirito17} under the additional assumption that the initial viscous vorticity fields are smooth. It seems to us that the extension of the results in \cite{BouchutCrippa13,CrippaNobiliSeisSpirito17} (and also preliminary works on the Euler equations \cite{BohunBouchutCrippa16}) to the present situation, although possible, would require a more detailed discussion. To keep the presentation as short and concise as possible, we thus simply drop the $L^1$ setting.

Shortly before finishing this manuscript, we learned that Ciampa, Crippa and Spirito have independently been preparing a result very similar to ours by exploiting the DiPerna--Lions theory in a similar manner as we do \cite{CiampaCrippaSpirito20}. In this work, the authors do consider the $p=1$ case and derive, in addition, estimates on convergence rates under additional regularity assumptions on the initial vorticity.
 

\section{Proof}

Theorem \ref{T1} essentially follows from the analogous result in the linear setting by a simple mollification argument. The linear result is a small extension of DiPerna and Lions's theory \cite{DiPernaLions89}.

\begin{prop}
\label{P1}
Let $p\geq 1$ and $\tilde p\geq 2$ be such that $\tilde p\geq p'$, where $p'$ is the dual exponent to $p$ (so that $\frac1p+\frac{1}{\tilde p}\geq 1$). Let $\theta^k\in L^{\infty}_tL^{\tilde p}_x$ be the unique distributional (and thus renormalized) solution to the linear advection-diffusion equation 
\begin{equation}\label{15}
\partial_t \theta^k + u^k \cdot \grad \theta^k  = \nu^k \laplace \theta^k + g^k
\end{equation}
with initial datum $\theta^k(0) = \theta_0^k\in L^{\tilde p}$, where $u^k\in L^1_tW^{1,p}_x$ with $\grad\cdot u^k=0$, and $g^k\in L^1_tL^{\tilde p}_x$. Suppose that 
\begin{gather*}
{\nu^k\to0,}\\
\theta^k_0 \to \theta_0 \quad \mbox{strongly in }L^{\tilde p}_x,\\
u^k \to u \quad \mbox{strongly in }L^1_tL^p_x,\\
\grad u^k\to \grad u\quad \mbox{weakly in } L^1_tL^{p}_x,\\
g^k\to g\quad \mbox{strongly in } L^1_tL^{\tilde p}_x.
\end{gather*}
Then, for any $q<{\tilde p}$, 
\[
\theta^k\to \theta\quad \mbox{strongly in }C_tL^q_x,
\]
where $\theta$ is the renormalized solution to the linear advection equation 
\[
\partial_t \theta + u\cdot \grad \theta =g
\]
with initial datum $\theta(0) = \theta_0$.
\end{prop}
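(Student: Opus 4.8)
The plan is to run a weak--strong compactness argument organized around the $L^2$ energy (renormalized) identity, exploiting crucially the \emph{sign} of the viscous dissipation together with weak lower semicontinuity of the norm. Since $\tilde p\ge 2$ and $\T^2$ has finite measure, I may work in the Hilbert space $L^2_x\supset L^{\tilde p}_x$ throughout. First I would collect the a priori information: the energy estimate of the type \eqref{4} gives $\|\theta^k\|_{L^\infty_tL^{\tilde p}_x}\lesssim\|\theta_0^k\|_{L^{\tilde p}_x}+\|g^k\|_{L^1_tL^{\tilde p}_x}$, which is uniform in $k$ since the right-hand side converges, so $\{\theta^k\}$ is bounded in $L^\infty_tL^2_x$. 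Writing \eqref{15} as $\partial_t\theta^k=-\grad\cdot(u^k\theta^k)+\nu^k\laplace\theta^k+g^k$ and using $\tfrac1p+\tfrac1{\tilde p}\le1$ (i.e.\ $\tilde p\ge p'$) to control the flux $u^k\theta^k$ in $L^1_tL^1_x$ shows $\partial_t\theta^k$ is bounded in $L^1_tW^{-N,s}_x$ for suitable $N,s$, hence $\{\theta^k\}$ is equicontinuous from $[0,T]$ into a negative Sobolev space $H^{-N}_x$, into which $L^2_x$ embeds compactly. By Arzel\`a--Ascoli a subsequence converges in $C_tH^{-N}_x$, and passing to the limit in the weak formulation of \eqref{15} identifies the limit as a distributional, hence (by DiPerna--Lions) renormalized, solution of $\partial_t\theta+u\cdot\grad\theta=g$. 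Here the only delicate term, the flux, passes to the limit because $u^k\to u$ strongly in $L^1_tL^p_x$ while $\theta^k\rightharpoonup\theta$ weakly-$*$ in $L^\infty_tL^{\tilde p}_x$, the pairing being legitimate since $\tilde p'\le p$. As the renormalized solution is unique, the whole sequence converges and $\theta^k(t)\rightharpoonup\theta(t)$ weakly in $L^{\tilde p}_x$ for every $t$.

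The heart of the matter is upgrading this to strong convergence, and here I would use the $L^2$ balance. Testing \eqref{15} with $\theta^k$ (equivalently, renormalizing with $\beta(s)=s^2$) and using $\grad\cdot u^k=0$ gives, for every $t$,
\[
\tfrac12\|\theta^k(t)\|_{L^2_x}^2+\nu^k\int_0^t\!\!\int_{\T^2}|\grad\theta^k|^2=\tfrac12\|\theta_0^k\|_{L^2_x}^2+\int_0^t\!\!\int_{\T^2}\theta^kg^k,
\]
while the renormalized limit satisfies $\tfrac12\|\theta(t)\|_{L^2_x}^2=\tfrac12\|\theta_0\|_{L^2_x}^2+\int_0^t\!\int_{\T^2}\theta g$. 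The decisive structural point is that the dissipation term is nonnegative and may simply be discarded to obtain an upper bound. The cross term converges, since splitting $\theta^kg^k=\theta^k(g^k-g)+\theta^kg$ handles the first piece by $\|\theta^k\|_{L^\infty_tL^{\tilde p}_x}\|g^k-g\|_{L^1_tL^{\tilde p'}_x}\to0$ (using $\tilde p'\le\tilde p$) and the second by weak-$*$ convergence tested against $g\in L^1_tL^2_x$. As $\|\theta_0^k\|_{L^2_x}\to\|\theta_0\|_{L^2_x}$ as well, taking $\limsup_k$ yields $\limsup_k\|\theta^k(t)\|_{L^2_x}\le\|\theta(t)\|_{L^2_x}$; combined with the lower semicontinuity bound $\|\theta(t)\|_{L^2_x}\le\liminf_k\|\theta^k(t)\|_{L^2_x}$ this forces $\|\theta^k(t)\|_{L^2_x}\to\|\theta(t)\|_{L^2_x}$, and the Hilbert (uniform convexity) structure of $L^2_x$ then upgrades weak to strong convergence $\theta^k(t)\to\theta(t)$ in $L^2_x$ for each fixed $t$.

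To reach $C_tL^q_x$ I would make this uniform in time. Evaluating the balance at $t=T$ and inserting the convergences of all other terms shows the \emph{total} dissipation $\nu^k\int_0^T\!\int|\grad\theta^k|^2\to0$; hence the discrepancy between $\tfrac12\|\theta^k(t)\|_{L^2_x}^2$ and $\tfrac12\|\theta(t)\|_{L^2_x}^2$ is $o(1)$ uniformly in $t$, the cross term converging uniformly once pointwise strong $L^2$ convergence is in hand (via dominated convergence in $t$). Together with the $C_tH^{-N}_x$ convergence and the uniform $L^{\tilde p}_x$ bound — I would estimate $\langle\theta^k(t)-\theta(t),\theta(t)\rangle$ by decomposing $\theta(t)$ into a spatial mollification plus a remainder small in $C_tL^2_x$, trading the missing regularity against the negative-norm convergence — this gives $\theta^k\to\theta$ strongly in $C_tL^2_x$. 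Finally, interpolating this against the uniform $L^\infty_tL^{\tilde p}_x$ bound yields strong convergence in $C_tL^q_x$ for every $q<\tilde p$ (for $q\le2$ it is immediate), and, since the argument never used $\tilde p<\infty$, it covers the endpoint $\tilde p=\infty$ needed in the application.

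The main obstacle is precisely this passage from weak to strong convergence: one cannot pass to the limit inside $\theta^kg^k$ (nor inside $\beta'(\theta^k)$ for general renormalizations) because nonlinear functions do not commute with weak limits. The device that breaks the impasse is the sign of the viscous dissipation, which bounds $\|\theta^k(t)\|_{L^2_x}$ from above by a convergent quantity, squeezed against lower semicontinuity from below. The two delicate points are thus showing that the dissipation genuinely \emph{vanishes} in the limit, rather than merely having a favorable sign, and converting the fixed-time statement into a uniform-in-time one; both are where I expect to spend the most care.
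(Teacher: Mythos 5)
Your argument is correct and, through its first three stages --- compactness from the uniform $L^\infty_tL^{\tilde p}_x$ bound plus a time-derivative estimate, identification of the limit via DiPerna--Lions uniqueness (using $\tilde p\ge p'$), and pointwise-in-time strong $L^2$ convergence by squeezing between the viscous energy bound and weak lower semicontinuity --- it is essentially the paper's proof. The genuine difference lies in the passage to uniformity in time. You evaluate the energy balance at $t=T$, use the already-established pointwise convergence and the limit identity \eqref{4x} to conclude that the total dissipation $\nu^k\int_0^T\!\int|\grad\theta^k|^2$ vanishes, deduce uniform-in-time convergence of the $L^2$ norms (the cross term handled by dominated convergence once pointwise strong convergence is known), and then upgrade uniform weak to uniform strong convergence by pairing $\theta^k(t)-\theta(t)$ against a mollification of $\theta(t)$, playing the $C_tH^{-N}_x$ convergence against a remainder that is small uniformly in $t$. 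The paper instead establishes the one-sided bound \eqref{xx} uniformly in time from the outset (via the truncation sets $K_\eps$, without needing pointwise strong convergence first) and then runs a contradiction argument along a sequence $t^k\to t^*$, using lower semicontinuity and the strong time-continuity of $\theta$. Your route is direct rather than by contradiction and yields the vanishing of the dissipation as a by-product; the paper's route only ever uses the energy \emph{inequality} for $\theta^k$ (dissipation discarded), so it is insensitive to whether the viscous energy identity holds exactly.

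Three points in your write-up need more care. (i) Boundedness of $\partial_t\theta^k$ in $L^1_tH^{-N}_x$ does \emph{not} by itself give equicontinuity, since $L^1$ mass in time can concentrate; what you actually have is equi-integrability of $t\mapsto\|\partial_t\theta^k(t)\|_{H^{-N}_x}$, inherited from the assumed strong $L^1_t$ convergences of $u^k$ and $g^k$ together with $\|u^k(t)\theta^k(t)\|_{L^1_x}\lesssim\|u^k(t)\|_{L^p_x}$ --- this is exactly what the paper's phrase ``precompact in $L^1((0,T);H^{-s})$'' encodes. (ii) Your dissipation step invokes the viscous energy \emph{identity}; this is standard for the advection-diffusion equation in this class, but note that your final expansion $\|\theta^k(t)-\theta(t)\|_{L^2_x}^2=\bigl(\|\theta^k(t)\|_{L^2_x}^2-\|\theta(t)\|_{L^2_x}^2\bigr)-2\langle\theta(t),\theta^k(t)-\theta(t)\rangle$ requires only the one-sided uniform bound, so the inequality would in fact suffice and the identity can be avoided altogether. (iii) The mollification step needs the orbit $\{\theta(t):t\in[0,T]\}$ to be compact in $L^2_x$, i.e.\ $\theta\in C_tL^2_x$; this is true (norm continuity from \eqref{4x} combined with weak continuity, exactly as the paper exploits in its Step 4), but it should be stated. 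Finally, your closing claim that the argument covers $\tilde p=\infty$ is unnecessary and not quite right as stated: the paper applies the proposition with finite $\tilde p>\max\{p,p'\}$, and several steps (the weak-$*$ topology, the factor $\tilde p$ in the a priori bound \eqref{Lpbound}) would need modification at that endpoint.
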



\begin{proof}[Proof of Proposition \ref{P1}]
\emph{Step 1. Compactness.} Because $\theta^k$ is a renormalized solution (see~\cite[Section IV.3]{DiPernaLions89}), it holds that
\begin{equation}\label{Lpbound}
\|\theta^k\|_{L^{\infty}_tL^{\tilde p}_x} \le \|\theta_0^k\|_{L^{\tilde p}_x} + \tilde p\|g^k\|_{L^1_tL^{\tilde p}_x},
\end{equation}
and thus, by the convergence assumption on the forcing term and the initial data, the right-hand side is bounded uniformly in $k$. In particular, there exists a function $\tilde\theta\in L^{\infty}_t L^{\tilde p}_x$ and a subsequence (still denoted $\theta^k$) such that $\theta^k\rightharpoonup\tilde\theta$ weakly* in $L^\infty_tL^{\tilde p}_x$. Moreover,
because of
\begin{equation*}
\partial_t\theta^k=-\div(\theta^ku^k)+g^k+\nu^k\Delta\theta^k,
\end{equation*}
the sequence $\partial_t\theta^k$ is precompact in $L^1((0,T);H^{-s}(\T^2))$ for some $s>0$; indeed, since $u^k\to u$ strongly in $L^1_tL^p_x$ and $\theta^k\rightharpoonup\tilde \theta$ weakly* in $L^\infty_tL_x^{\tilde p}$, we have that $\theta^ku^k$ converges to $\tilde \theta u$ weakly in $L^1_tL^1_x$.

But from this precompactness and the already established weak* convergence, it follows that the convergence of $\theta^k$ towards $\tilde \theta$ takes place  in $C([0,T];L^{\tilde p}_w(\T^2))$, and thus, in particular, $\tilde \theta \in C([0,T];L^{\tilde p}_w(\T^2))$, where $L^{\tilde p}_w$ denotes the space $L^{\tilde p}$ equipped with the weak topology.

\medskip

\emph{Step 2. It holds that $\tilde \theta = \theta$. In particular, the full sequence is convergent.} Let $\varphi\in C_c^2([0,T)\times\T^2)$, then for each $k$ we have the weak formulation
\begin{equation*}
\int_0^T\int_{\T^2}\left(\partial_t\varphi\theta^k+\theta^k\nabla\varphi\cdot u^k+\nu^k \theta^k\Delta\varphi+g^k\varphi\right) \dx\dt +\int_{\T^2}\varphi(0,x)\theta^k_0\dx=0.
\end{equation*}
Owing to the convergence $\theta^k\to\tilde\theta$ in $C([0,T];L^{\tilde p}_w(\T^2))$ and the convergences of the initial data and the force, we conclude
\begin{equation*}
\int_0^T\int_{\T^2}\left(\partial_t\varphi\tilde{\theta}+\tilde{\theta}\nabla\varphi\cdot u+g\varphi\right) \dx\dt +\int_{\T^2}\varphi(0,x)\theta_0\dx=0,
\end{equation*}
so that $\tilde\theta$ is a distributional solution of the linear advection equation with data $\theta_0$. But since ${\tilde p}\geq p'$, by DiPerna-Lions theory~\cite{DiPernaLions89} we conclude that $\tilde\theta=\theta$ is the unique renormalized solution, and hence the full sequence $\theta^k$ converges to $\theta$ in $C([0,T];L^{\tilde p}_w(\T^2))$. 

\medskip

\emph{Step 3. Strong convergence {pointwise} in time.} Next, we provide the argument for strong convergence in $L^{\tilde p}(\T^2)$, pointwise in time. Since we already know convergence in $C([0,T];L_w^{\tilde p}(\T^2))$, it suffices to show convergence of the $L^{\tilde p}$-norms, {pointwise} in time.

Since ${\tilde p}\geq 2$, by Jensen's inequality all convergences remain true \emph{a fortiori} in $L^2_x$. Then, since $\theta^k$ and $\theta$ are renormalized solutions of the respective equations they satisfy, we have the estimate
\begin{equation*}
\int_{\T^2}|\theta^k(t)|^2\dx \leq \int_{\T^2}|\theta^k_0|^2\dx+2\int_0^t\int_{\T^2}g^k\theta^k \dx\dd s  
\end{equation*}
as well as the identity
\begin{equation}\label{4x}
\int_{\T^2}|\theta(t)|^2\dx = \int_{\T^2}|\theta_0|^2\dx+2\int_0^t\int_{\T^2}g\theta \dx\dd s,
\end{equation}
and consequently
\begin{equation*}
\begin{aligned}
\int_{\T^2}|\theta^k(t)|^2\dx&-\int_{\T^2}|\theta(t)|^2\dx\\
&\leq \int_{\T^2}|\theta^k_0|^2\dx-\int_{\T^2}|\theta_0|^2\dx+2\int_0^t\int_{\T^2}(g^k\theta^k-g\theta) \dx\dd s.\end{aligned}
\end{equation*}
In view of the convergence assumption on the initial data and by virtue of the weak convergence of $\theta^k$ to $\theta$, uniformly in time, and the strong convergence of $g^k$ to $g$, the right-hand side tends to zero uniformly in time. {Indeed, for $\eps$ given and any $t$, we define $K_{\eps}(t)$ as the subset of $\T^2$ in which $g$ is essentially bounded by $\eps^{-1}\|g\|_{L^1((0,T)\times\T^2)}$, and we set $K_{\eps} = \{(t,x):\: x\in K_{\eps}(t)\}$. Then $|K_{\eps}^c|\le \eps$ by construction. We then estimate
\[
\left| \int_0^t\int_{\T^2}g(\theta^k-\theta) \dx\dd s \right| \le \int_0^T \left| \int_{K_{\eps}(t)} g (\theta^k-\theta)\dx\right|\dt + \|\chi_{K^c_{\eps}}g\|_{L^1_tL^{\tilde p'}_x} \|\theta^k-\theta\|_{L^{\infty}_tL_x^{\tilde p}},
\]
and observe that for every fixed $\eps$, the first term on the right-hand side converges to zero by the dominated convergence theorem as $k\to \infty$, while the second one vanishes as $\eps\to0$ uniformly in $k$.
It follows that 
\begin{equation}\label{xx}
\limsup_{k\to\infty}\sup_t\left( \|\theta^k(t)\|_{L^2_x}- \|\theta(t)\|_{L^2_x}\right) \le 0.
\end{equation}
}

On the other hand, we have the well-known weak lower semicontinuity of the $L^2$ norm, more precisely
\begin{align*}
\MoveEqLeft\liminf_{k\to\infty}\int_{\T^2}(|\theta^k(t)|^2-|\theta(t)|^2)\dx\\
&=\liminf_{k\to\infty}\left(\int_{\T^2}|\theta^k(t)-\theta(t)|^2\dx+2\int_{\T^2}\theta(t)(\theta^k(t)-\theta(t))\dx\right)\\
&\geq \lim_{k\to\infty} 2\int_{\T^2}\theta(t)(\theta^k(t)-\theta(t))\dx=0, 
\end{align*}
for any fixed $t$, {because $\theta(t)\in L^2_x$}. Putting both inequalities together, we obtain $\theta^k(t)\to\theta(t)$ strongly in $ L^2(\T^2)$.

For ${\tilde p}>2$, interpolation between Lebesgue spaces yields $\theta^k(t)\to\theta(t)$ in $ L^q(\T^2)$ for any $q<{\tilde p}$.


\medskip

\emph{{Step 4. Strong convergence uniformly in time.}} It remains to show that the convergence takes place  in $C([0,T];L^{\tilde p}(\T^2))$. As in the previous step, it is enough to treat the case $\tilde p=2$.

Towards a contradiction, we assume that there exists a positive $\delta$,  a subsequence of $\{\theta^k\}$ that we do not relabel for notational convenience, and a sequence of points $t^k\in [0,T]$ such that
\begin{equation}\label{xxx}
\|\theta^k(t^k) - \theta(t^k)\|_{L^2_x}\ge \delta.
\end{equation}
We may extract a further subsequence (again not relabelled) such that $t^k\to t^*$ for some $t^*\in [0,T]$. By virtue of \eqref{4x} and the dominated convergence theorem, we have that
$
\|\theta(t^*)\|_{L^2_x}  = \lim_{k\to \infty} \|\theta(t^k)\|_{L^2_x}
$.
Moreover, because $\theta \in C((0,T);H^{-s}(\T^2))$ for some $s$, an approximation argument reveals that $\theta(t^k)\rightharpoonup \theta(t^*)$ weakly in $L^2_x$. It follows that
\begin{equation}\label{5x}
\lim_{k\to\infty} \|\theta(t^k)-\theta(t^*)\|_{L^2_x}=0.
\end{equation}
In view of the convergence of $\theta^k$ in $C([0,T]; L^2_w(\T^2))$, we then deduce $\theta^k(t^k) \rightharpoonup \theta(t^*)$ weakly in $L^2$. Lower semicontinuity, \eqref{xx} and \eqref{5x} then entail 
\begin{align*}
\|\theta(t^*)\|_{L^2_x}^2 & \le \liminf_{k\to \infty} \|\theta^k(t^k)\|_{L^2_x}^2 \\
&\le \limsup_{k\to \infty} \sup_t \left(\|\theta^k(t)\|_{L^2_x}^2 - \|\theta(t)\|_{L^2_x}^2\right) +\lim_{k\to \infty} \|\theta(t^k)\|_{L^2}^2\\
&\le \|\theta(t^*)\|_{L^2_x}^2.
\end{align*}
Thus 
\[
\lim_{k\to \infty}\|\theta^k(t^k) - \theta(t^*)\|_{L^2_x} = 0.
\]
The latter and \eqref{5x} and an application of the triangle inequality lead to a contradiction with \eqref{xxx}.
\end{proof}

We now turn to the proof of Theorem \ref{T1}.

In the following, we let $\ok = \omega^{\nu^k}$ and $\omega$ be the solutions to the Navier--Stokes and Euler equations considered in Theorem \ref{T1}, and write $\uk= u^{\nu^k}$, $\ok_0 = \omega^{\nu^k}_0$, and $g^k=g^{\nu^k}$, accordingly.

Inspired by  the strategy developed in \cite{ConstantinDrivasElgindi19}, we introduce two \emph{linear} problems in which $\varphi_{\ell}$ denotes a standard mollifier such that
\[
\varphi_{\ell}(x) = \ell^2 \varphi_1(\ell x).
\]
We denote by $\omega_{\ell}$   a  solution  of the linear advection equation
\begin{align}
\partial_t \omega_{\ell} + u\cdot \grad \omega_{\ell}  &= g\ast \varphi_{\ell} \label{10},\\
\omega_{\ell}(0)& = \omega_0\ast \varphi_{\ell},\label{11}
\end{align}
and by $\ok_{\ell}$ a  solution of the linear advection-diffusion equation
\begin{align}
\partial_t \ok_{\ell} + \uk \cdot \grad \ok_{\ell} & = \nu^k \laplace \ok_{\ell} + g^k\ast \varphi_{\ell},\label{12}\\
\ok_{\ell}(0) & = \ok_0\ast \varphi_{\ell}.\label{13}
\end{align}
Both problems are well-posed in the class $L^{\infty}((0,T)\times\T^2)$. Indeed, as a consequence of the mollification, the forcing terms in \eqref{10} and \eqref{12} belong to the class $L^1((0,T);L^{\infty}(\T^2))$ and the initial data in \eqref{11} and \eqref{13} belong to $L^{\infty}(\T^2)$. Hence, existence, uniqueness and stability of distributional solutions is guaranteed as a result of the DiPerna--Lions theory \cite{DiPernaLions89} provided that $u\in  L^1((0,T);W^{1,1}(\T^2))$. If $p>1$, this is certainly satisfied because in this case, our solutions to the Euler and Navier--Stokes vorticity equations belong to the class $L^{\infty}((0,T);L^p(\T^2))$, and thus, $u, u^k\in L^{\infty}((0,T);W^{1,p}(\T^2))$ as a consequence of the Calder\'on--Zygmund theory. If $p=1$, the Euler velocity field is merely a singular integral of an $L^1$ function and this line of argumentation is not applicable. Instead, uniqueness of distributional solutions to the advection equation \eqref{11} has been established in \cite{CrippaNobiliSeisSpirito17} based on a quantitative approach to continuity equations developed in \cite{Seis17}. Despite the regularizing effect of the diffusion, it is not apparent how to extend the results from \cite{CrippaNobiliSeisSpirito17,Seis17} to the diffusive setting \eqref{13} in the case $p=1$.

It is not difficult to observe that these linear problems approximate the nonlinear ones.

\begin{lemma}\label{L1}
It holds that $\omega_{\ell} \to \omega$ and $\ok_{\ell}\to \ok$ as $\ell\to \infty$ strongly in $C_tL^p_x$, uniformly in $k$.
\end{lemma}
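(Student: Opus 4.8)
\textbf{Proof proposal for Lemma \ref{L1}.}

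The plan is to treat the two convergences in parallel, since both are instances of comparing the solution of a linear (advection or advection-diffusion) equation with mollified data to the solution with unmollified data, while the velocity field and viscosity are held fixed. First I would record the key structural observation: the difference $\omega_\ell - \omega$ solves the \emph{same} linear equation as $\omega$ (namely \eqref{5}, i.e.\ $\partial_t(\cdot) + u\cdot\grad(\cdot) = (\cdot)$) but now driven by the mollified minus unmollified data, that is, with initial datum $\omega_0\ast\varphi_\ell - \omega_0$ and forcing $g\ast\varphi_\ell - g$. Because $u$ is divergence-free and lies in $L^1_tW^{1,p}_x$, the DiPerna--Lions renormalization theory applies, and in particular the inviscid $L^p$ energy estimate \eqref{400} holds for this difference:
\begin{equation*}
\|\omega_\ell - \omega\|_{L^\infty_t L^p_x} \le \|\omega_0\ast\varphi_\ell - \omega_0\|_{L^p_x} + p\,\|g\ast\varphi_\ell - g\|_{L^1_t L^p_x}.
\end{equation*}
The analogous estimate for the viscous pair $\ok_\ell - \ok$ uses the viscous energy estimate \eqref{4} (or its renormalized analogue \eqref{Lpbound}) applied to the difference solving \eqref{1} with the mollified-minus-unmollified data; crucially the dissipation term only helps, so the right-hand side is bounded by the \emph{same} data differences, with $\omega_0,g$ replaced by $\ok_0,g^k$.

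Second, I would invoke the standard approximation-by-mollification fact: $\omega_0\ast\varphi_\ell \to \omega_0$ strongly in $L^p_x$ and $g\ast\varphi_\ell \to g$ strongly in $L^1_tL^p_x$ as $\ell\to\infty$, since $1<p<\infty$ and convolution with $\varphi_\ell$ converges to the identity in every such $L^p$ space. This immediately yields $\omega_\ell\to\omega$ in $L^\infty_tL^p_x$ (and hence in $C_tL^p_x$, which I address below). For the viscous case the subtlety is the phrase \emph{uniformly in $k$}: the bound on $\|\ok_\ell - \ok\|_{L^\infty_tL^p_x}$ involves $\|\ok_0\ast\varphi_\ell - \ok_0\|_{L^p_x}$ and $\|g^k\ast\varphi_\ell - g^k\|_{L^1_tL^p_x}$, which a priori depend on $k$. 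The point I would make is that these converge to $0$ as $\ell\to\infty$ \emph{uniformly in $k$} precisely because the sequences $\{\ok_0\}$ and $\{g^k\}$ are strongly convergent (to $\omega_0$ and $g$) and hence precompact in $L^p_x$ and $L^1_tL^p_x$ respectively; on a (pre)compact set the modulus of continuity of the mollification operator $f\mapsto f\ast\varphi_\ell - f$ is uniform. Concretely, $\|\ok_0\ast\varphi_\ell - \ok_0\|_{L^p_x} \le \|(\ok_0-\omega_0)\ast\varphi_\ell\|_{L^p_x} + \|\omega_0\ast\varphi_\ell - \omega_0\|_{L^p_x} + \|\omega_0 - \ok_0\|_{L^p_x}$, and Young's inequality ($\|\varphi_\ell\|_{L^1}=1$) bounds the first term by $\|\ok_0-\omega_0\|_{L^p_x}$; splitting the supremum over $k$ into finitely many indices plus a tail controlled by strong convergence gives the claimed uniformity, and identically for the forcing.

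Third, I would upgrade $L^\infty_tL^p_x$ convergence to $C_tL^p_x$ convergence. Here the main point is that $\omega_\ell$, $\ok_\ell$, $\omega$, $\ok$ all belong to $C_tL^p_x$: the linear equations with $L^1_tL^\infty_x$ forcing and $L^\infty_x$ initial data produce solutions that are weakly-$\ast$ continuous in time and, by the norm-continuity argument already carried out in Steps 3--4 of Proposition \ref{P1} (lower semicontinuity paired with the energy \emph{identity}), strongly continuous in $L^p_x$. Since uniform-in-time convergence of elements of $C_tL^p_x$ is exactly convergence in $C_tL^p_x$, the $L^\infty_t$ bounds above deliver the result. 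The step I expect to be the genuine obstacle is the \textbf{uniformity in $k$} in the viscous case: one must be careful that the energy estimate for $\ok_\ell - \ok$ really closes with a right-hand side depending only on the data differences and not on $k$-dependent norms of $\ok_\ell$ or $\ok$ themselves (which is why using the renormalized/linear energy estimate for the difference, rather than a Gronwall argument on the product, is essential), and that the precompactness argument for uniform mollification error is applied to both the initial data and the forcing simultaneously. Everything else is routine mollifier estimates and the already-established time-continuity.
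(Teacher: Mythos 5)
Your proposal follows the paper's overall route (difference solves the linear problem with mollified-minus-unmollified data, energy estimate for the difference, mollifier convergence plus precompactness for uniformity in $k$), but it has a genuine gap at the one step that is actually delicate. You assert that, because $u$ is divergence-free and in $L^1_tW^{1,p}_x$, ``the DiPerna--Lions renormalization theory applies, and in particular the inviscid $L^p$ energy estimate \eqref{400} holds for this difference.'' This is not automatic, and it is precisely the crux for the range $1<p<2$, which the paper identifies as its main contribution. Renormalization is a \emph{nonlinear} notion: if $\omega_{\ell}$ and $\omega$ are only renormalized solutions, there is no linearity to invoke, and knowing the equations for $\beta(\omega_{\ell})$ and $\beta(\omega)$ tells you nothing directly about $\beta(\omega_{\ell}-\omega)$, which is what the energy estimate requires. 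Worse, for $p<\frac43$ the product $u(\omega_{\ell}-\omega)$ need not even be locally integrable, so the statement that the difference ``solves the same linear equation'' has no distributional meaning; and for $\frac43\le p<2$ the DiPerna--Lions commutator argument needs $u\in L^1_tW^{1,q}_x$ and $\theta\in L^{\infty}_tL^{q'}_x$ with $\frac1q+\frac1{q'}\le 1$, which fails when both exponents equal $p<2$. The paper flags exactly this point (``it is not an immediate consequence of the concept of renormalized solutions that differences of such solutions are themselves renormalized'') and justifies it by an analysis modeled on Lemma 4.2 of \cite{ColomboCrippaSpirito15}. Without that ingredient, your estimate on $\|\omega_{\ell}-\omega\|_{L^{\infty}_tL^p_x}$ is unproven for the very exponents the lemma is needed for; for $p\ge 2$ your argument would be fine, but then the whole theorem admits a much simpler proof anyway.

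In the viscous case your phrase ``the dissipation term only helps'' also misses how the paper actually closes the argument: it is not that dissipation has a good sign in the estimate, but that the parabolic energy bound \eqref{4} gives $\grad|\ok|^{p/2}\in L^2_tL^2_x$, hence $\ok\in L^2_tL^q_x$ for every $q<\infty$ by Sobolev embedding, hence $\uk\in L^2_tW^{1,q}_x$ by Calder\'on--Zygmund; in this higher-integrability setting every distributional solution is renormalized, the products are integrable, and the difference $\ok_{\ell}-\ok$ is a distributional (hence renormalized) solution simply by linearity. This regularization is the reason the viscous case does \emph{not} need the Colombo--Crippa--Spirito-type lemma, and it should appear in your argument. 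On the positive side, your third point (the three-epsilon argument with Young's inequality showing that strong convergence of $\{\ok_0\}$ and $\{g^k\}$ yields mollification errors vanishing uniformly in $k$) is correct and in fact supplies a detail the paper leaves implicit, as is your remark on upgrading membership from $L^{\infty}_tL^p_x$ to $C_tL^p_x$.
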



\begin{proof}
The arguments for both statements are quite similar and straightforward. We start with the inviscid case, in which the solutions have a lower regularity.

We first notice that 
  $\eta = \omega_{\ell}-\omega$ is a difference of renormalized solutions which satisfies the advection equation
 \begin{align*}
  \partial_t \eta + u\cdot \grad \eta & = g\ast \varphi_{\ell} - g,\\
\eta(0) & = \omega_0\ast \varphi_{\ell} - \omega_0,
\end{align*}
in the sense of a renormalized solution. Note it is not an immediate consequence of the concept of renormalized solutions that differences of such solutions are themselves renormalized. That this holds true can be verified by an analysis very similar to that in Lemma 4.2 of \cite{ColomboCrippaSpirito15}, in which a damping term instead of a forcing term is considered. 
We then deduce that $\eta$ satisfies an a priori estimate analogous to \eqref{4}, thus
\[
\|\omega_{\ell} - \omega\|_{L^{\infty}_tL^p_x} \le \|g\ast \varphi_{\ell}-g\|_{L^1_tL^p_x} + \|\omega_0\ast\varphi_{\ell} - \omega_0\|_{L^p_x}.
\]
The first statement now follows from the obvious convergence of the right-hand side.

In the diffusive setting, we notice that the solutions to the Navier--Stokes equations obey the energy estimate
\[
\|\omega^k\|_{L^{\infty}_tL^p_x} + \nu^k \|\grad |\omega^k|^{\frac{p}2}\|^{\frac2p}_{L^2_tL^2_x} \lesssim \|g^k\|_{L^1_tL^p_x} + \|\omega^k_0\|_{L^p_x},
\]
and thus, $\omega^k\in L^2_tL^{q}_x$ by Sobolev embedding, for any $q\in[1,\infty)$. Via Calder\'on--Zygmund estimates, we deduce that $u$ belongs to the class $L^2_tW^{1,q}_x$ for every $q\in[1,\infty)$, and thus, by DiPerna--Lions theory, every distributional solution is automatically renormalized, and so is $\eta^k = \omega_{\ell}^k - \omega^k$, which is a difference of distributional solutions. From this observation, the estimate follows as in the inviscid case.
\end{proof}

The statement in Theorem \ref{T1} now follows via the  
 triangle inequality,
\[
\|\ok - \omega\|_{L^{\infty}_tL^p_x} \le \|\ok - \ok_{\ell}\|_{L^{\infty}_tL^p_x} + \|\ok_{\ell} - \omega_{\ell}\|_{L^{\infty}_tL^p_x} + \|\omega_{\ell}-\omega\|_{L^{\infty}_tL^p_x} ,
\]
from the  previous lemma and  Proposition \ref{P1}, the latter being applied with any $\tilde p>\max\{p,p'\}$. Notice that by construction, for any $\ell$ fixed, $\omega^k_0\ast\varphi_{\ell}\to \omega_0\ast\varphi_{\ell}$ in $L^{\tilde p}$ and $g^k\ast\varphi_{\ell}\to g\ast\varphi_{\ell}$ in $L^1_tL^{\tilde p}_x$ with estimates dependent on $\tilde p$.

\section*{Acknowledgment} The authors thank Gennaro Ciampa, Gianluca Crippa and Stefano Spirito for sharing their preliminary work \cite{CiampaCrippaSpirito20} with us. This work is partially funded by the Deutsche Forschungsgemeinschaft (DFG, German Research Foundation) under Germany's Excellence Strategy EXC 2044 --390685587, Mathematics M\"unster: Dynamics--Geometry--Structure.
\bibliography{euler}
\bibliographystyle{acm}

\end{document}